\newcommand{\SSd}{\mathbb{S}^{n-1}}
\newcommand{\C}{\mathbb{C}}
\newcommand{\R}{\mathbb{R}}
\newcommand{\AAn}[1]{\mathbb{A}^{#1,n}}
\newcommand{\df}{\textup{d}}
\newcommand{\dx}{\,\textup{d}}
\newcommand{\SYM}{{\mathcal{S}}}
\newcommand{\SII}{{\mathbb{S}^2}}
\newcommand{\AII}{{A}}
\newcommand{\AIV}{{\mathcal{A}}}
\newcommand{\diag}{{\textup{diag}}}
\newcommand{\tr}{{\textup{tr}}}
\newcommand{\gnuplotdefs}{
  Wm(x) = log(x/log(x/log(x/log(x/log(x/log(x/log(x/log(x/log(x/log(x/log(x/log(x/log(x/log(x/log(x/log(x/log(x/log(x/log(x/log(x/log(-x)))))))))))))))))))));
  Aiiii(a,b) = 0.5*(2*a - 2*((1 - a)/(2*b**0.5) - a*b)/(1/b**0.5 - b));
  Aiiii_asymptote(a) = (3*a - 1)/(2 + 1/a*Wm(-exp(2)*a/2));
  Aiiii_asymptote_2(a) = (3*a - 1)/(2 + 1/a*log(exp(2)*a/2));
  Aiiii_asymptote_3(a) = (3*a - 1)/(2 - 1/a);
  Aiiii_asymptote_4(a) = (3*a - 1)/(2 - 8/pi**2*(a - 1)**2);
  Aiiii_asymptote_5(a) = (3*a - 1)/(2 - 1.0/32.0*(pi - (32*(a-1) + pi**2)**0.5)**2);
  set samples 1000;
}
\newtheorem{lemma}{Lemma}
\newtheorem{remark}{Remark}
\title{Short note on a relation between the inverse of the cosine and Carlson's elliptic integral $R_D$}
\author[1]{Felix Ospald \thanks{Send any comments to (appreciated): \href{mailto:felix.ospald@gmail.com}{felix.ospald@gmail.com}}}
\author[1]{Roland Herzog}
\affil[1]{TU Chemnitz, Faculty of Mathematics, 09107 Chemnitz, Germany}
\begin{document}

\maketitle

\begin{abstract}
% \noindent
We prove a simple relation for a special case of Carlson's elliptic integral $R_D$.
The findings are applied to derive explicit formulae for the asymptotics of certain moments of the angular central Gaussian distribution in terms of the second moment.
% The relation is employed to show the asymptotic behavior close to the planar limit for the exact closure of Jeffery's equation.
\end{abstract}

\section{Carlson's symmetric form}

\subsection{Primary definitions}

Using Carlson's symmetric integrals instead of Legendre's elliptic integrals unifies and simplifies the evaluation of elliptic integrals
to the evaluation to some basic elliptic integrals \cite{ZillCarlson1969,Carlson1988}, for which efficient algorithms \cite{Carlson1995} are available.
For a non-negative integer $n$, parameters $a, b_j \in \R$ and arguments $z_j \in \mathbb{C} \setminus (-\infty, 0]$ the symmetric integrals are defined in terms of the {multivariate hypergeometric function}
\begin{equation*}
%   \label{eq:R-function}
  R_{-a}\left({b};{z}\right) := R_{-a}\left(b_{1},\dots,b_{n};z_{1}, \dots,z_{n}\right) := \frac{1}{\mathrm{B}\left(a,a^{\prime}\right)}\int_{0}^{\infty} \!\!\! t^{a^{\prime}-1}\prod^{n}_{j=1}(t+z_{j})^{-b_{j}}\dx{t}
%     &= \frac{1}{\mathrm{B}\left(a,a^{\prime}\right)}\int_{0}^{\infty}t^{a-1}\prod^{n}_{j=1}(1+tz_{j})^{-b_{j}}\dx{t}
\end{equation*}
with
\begin{equation*}
  a^{\prime} := -a+\sum_{j=1}^{n}b_{j} \quad \text{and} \quad 
  \mathrm{B}\left(a,b\right) %\int_{0}^{1}t^{a-1}(1-t)^{b-1}\mathrm{d}t
    := \frac{\Gamma\left(a\right)\Gamma\left(b\right)}{\Gamma\left(a+b\right)}.
\end{equation*}
The $R$-function is homogeneous and of degree $-a$ in $z$ and normalized, i.e.
\begin{equation*}
%   \label{eq:homogeneity_property}
  R_{-a}\left({b};\lambda{z}\right) = \lambda^{-a} R_{-a}\left({b};{z}\right) \quad \text{and} \quad
  R_{-a}\left({b};{1}\right) = 1.
\end{equation*}
Further it is symmetric in the variables $z_i$ and $z_j$ if the parameters $b_i$ and $b_j$ are equal.
Some frequently used elliptic integrals have a special name, in particular we have
\begin{align*}
%   \label{eq:Carlson_R_F}
  R_F(x,y,z) &:= R_{-\frac{1}{2}} \left(\frac{1}{2},\frac{1}{2},\frac{1}{2}; x, y, z\right) = \frac{1}{2} \int_{0}^{\infty} \!\!\!\! \frac{1}{s(t)} \, \dx{t}, \\
%   \label{eq:Carlson_R_J}
  R_J(x,y,z,p) &:= R_{-\frac{3}{2}} \left(\frac{1}{2},\frac{1}{2},\frac{1}{2}, 1; x, y, z, p\right) = \frac{3}{2} \int_{0}^{\infty} \!\!\!\! \frac{1}{s(t) (p + t)} \, \dx{t}, \\
%   \label{eq:Carlson_R_D}
  R_D(x,y,z) &:= R_{-\frac{3}{2}} \left(\frac{1}{2},\frac{1}{2},\frac{3}{2}; x, y, z\right) = R_J(x,y,z,z), \\ %= \frac{3}{2} \int_{0}^{\infty} \!\!\!\! \frac{1}{s(t) (z+t)} \, \dx{t} \\
  R_C(x,y) &:= R_{-\frac{1}{2}} \left(\frac{1}{2}, 1; x, y\right) =  R_F(x,y,y),
%     R_G(x,y,z) &= R_{\frac{1}{2}} \left(\frac{1}{2},\frac{1}{2},\frac{1}{2}; x, y, z\right)
\end{align*}
where $s(t) := \sqrt{x+t}\sqrt{y+t}\sqrt{z+t}$.

\subsection{Important properties}

We have the following relation between $R_F$ and $R_D$:
\begin{equation*}
%   \label{eq:Carlson_R_F_derivative_z}
  R_D(x,y,z) = -6 \frac{\partial R_F}{\partial z}(x,y,z)
\end{equation*}
and the partial derivatives of $R_D$ w.r.t.\ the symmetric variables are given by
\begin{equation*}
%   \label{eq:Carlson_R_D_derivative_xy}
  \begin{aligned}
  \frac{\partial R_D}{\partial x}(x,y,z)
%       = \frac{3}{4} \int_0^\infty\!\!\!\! \frac{1}{\sqrt{(x + s)^3(y + s)(z + s)^3} } \d{s} \\
    &= \frac{R_D(y,z,x) - R_D(x,y,z)}{2(x-z)}, \\
  \frac{\partial R_D}{\partial y}(x,y,z)
    %  = \frac{3}{4} \int_0^\infty\!\!\!\! \frac{1}{\sqrt{(x + s)(y + s)^3(z + s)^3} } \d{s} \\
    &= \frac{R_D(x,z,y) - R_D(x,y,z)}{2(y-z)}
%       = \frac{\partial R_D}{\partial x}(y,x,z) 
  \end{aligned}
\end{equation*}
and for the derivative w.r.t.\ the last variable we have
\begin{equation*}
%   \label{eq:Carlson_R_D_derivative_z}
  \frac{\partial R_D}{\partial z}(x,y,z)
%       = -\frac{9}{4} \int_0^\infty\!\!\!\! \frac{1}{\sqrt{(x + s)(y + s)(z + s)^5} } \d{s} \\
    = -\frac{3}{2}x^{-\tfrac{1}{2}}y^{-\tfrac{1}{2}}z^{-\tfrac{3}{2}} - \frac{\partial R_D}{\partial x}(x,y,z) - \frac{\partial R_D}{\partial y}(x,y,z).
\end{equation*}
Further there are the following symmetric connections between $R_D$, their arguments and $R_F$
\begin{align*}
%   \label{eq:R_D_connection_args}
  R_D(y,z,x) + R_D(x,z,y) + R_D(x,y,z) &= 3 (xyz)^{-\frac{1}{2}} \quad \text{and} \\
%   \label{eq:R_D_connection_R_F}
  x R_D(y,z,x) + y R_D(x,z,y) + z R_D(x,y,z) &= 3 R_F(x,y,z).
\end{align*}
For more properties cf.\ the work of B.~C.~Carlson and his chapter in \cite{OlverLozierBoisvertClark2010}.

\section{The relation}

\begin{lemma}
  For $x \in (0,1)$ we have
  \begin{equation}
    \label{eq:R_D_arccos}
    \frac{1}{3} x R_D(1, 1, x^2) = \frac{1}{2} \frac{\df^2}{\df{x}^2} \left[\arccos(x)\right]^2 .
  \end{equation}
\end{lemma}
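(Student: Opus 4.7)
The plan is to exploit the identity $R_D(x,y,z) = -6\,\partial R_F(x,y,z)/\partial z$ recalled in the excerpt together with a closed-form evaluation of $R_F(1,1,x^2)$, which reduces \eqref{eq:R_D_arccos} to a short chain-rule computation.

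First I would evaluate $R_F(1,1,x^2)$ in closed form. By the symmetry of $R_F$ in its three arguments and the identity $R_F(\cdot,y,y) = R_C(\cdot,y)$ listed in the excerpt, $R_F(1,1,x^2) = R_C(x^2,1)$. The defining integral of $R_C$ in this case is elementary: the substitution $u = \sqrt{t+x^2}$ turns it into a rational integral of the form $\int_x^\infty \df u/\bigl(u^2 + (1-x^2)\bigr)$, whose antiderivative is an arctangent. Evaluating at the bounds and using $\arctan\bigl(x/\sqrt{1-x^2}\bigr) = \pi/2 - \arccos(x)$ collapses the result to
\begin{equation*}
  R_F(1,1,x^2) = \frac{\arccos(x)}{\sqrt{1-x^2}}.
\end{equation*}

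Second, set $f(x) := [\arccos(x)]^2$ and note that $f'(x) = -2\arccos(x)/\sqrt{1-x^2}$, so the closed form above is nothing but $R_F(1,1,x^2) = -\tfrac{1}{2}\,f'(x)$. Differentiating this identity once more in $x$ and applying the chain rule together with $\partial R_F(x,y,z)/\partial z = -\tfrac{1}{6} R_D(x,y,z)$ gives
\begin{equation*}
  -\tfrac{1}{2}\, f''(x) = \frac{\df}{\df x} R_F(1,1,x^2) = 2x\cdot\frac{\partial R_F}{\partial z}(1,1,x^2) = -\frac{x}{3}\,R_D(1,1,x^2),
\end{equation*}
which after multiplication by $-1$ is precisely \eqref{eq:R_D_arccos}.

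The main (and essentially only) non-routine step is the closed-form evaluation of $R_F(1,1,x^2)$ in the first step; everything after that is a mechanical application of the chain rule and of the partial-derivative identity $R_D = -6\,\partial_z R_F$. Conceptually, \eqref{eq:R_D_arccos} is not a coincidence but reflects that $R_F(1,1,x^2)$ equals $-\tfrac{1}{2}$ times the first derivative of $[\arccos(x)]^2$, so that differentiating once more — which is exactly what the relation between $R_D$ and $\partial_z R_F$ encodes — produces the second derivative.
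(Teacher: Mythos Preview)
Your proof is correct and takes a genuinely different route from the paper's own argument. The paper invokes a reduction formula from the DLMF, namely
\[
  \tfrac{1}{3}x\,R_D(1,1,x^2)=\frac{x}{x^2-1}\Bigl(R_C(x^2,1)-\tfrac{1}{|x|}\Bigr),
\]
substitutes the closed form $R_C(x^2,1)=\arccos(x)/\sqrt{1-x^2}$, simplifies the left-hand side to $\tfrac{1}{1-x^2}-x\arccos(x)/(1-x^2)^{3/2}$, and then separately computes $\tfrac{1}{2}\,\df^2[\arccos(x)]^2/\df x^2$ to verify that the two expressions match. Your argument instead goes through the derivative identity $R_D=-6\,\partial_z R_F$: you evaluate $R_F(1,1,x^2)=R_C(x^2,1)$ directly from the integral, recognize this as $-\tfrac{1}{2}\,\df[\arccos(x)]^2/\df x$, and then differentiate once more via the chain rule so that the second derivative of $[\arccos(x)]^2$ appears automatically. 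The paper's proof is a straight verification that both sides coincide; your proof is more structural, in that it explains \emph{why} \eqref{eq:R_D_arccos} holds --- namely because $R_F(1,1,x^2)$ already encodes the first derivative of $[\arccos(x)]^2$, and passing from $R_F$ to $R_D$ is precisely a $z$-differentiation. Your version also does not rely on the cited DLMF reduction for $R_D(1,1,\cdot)$, only on the identities stated in the paper's own preliminaries.
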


\begin{proof}
  By \cite[eq.~19.20.20]{OlverLozierBoisvertClark2010} we have % % http://dlmf.nist.gov/19.20.E20
  \begin{equation*}
    \frac{1}{3} x R_D(1, 1, x^2) = \frac{x}{x^2-1}\left(R_{C}\left(x^2,1\right)-\frac{1}{|x|}\right)
  \end{equation*}
  and
  \begin{equation*}
     R_{C}\left(x^2,1\right) = \begin{cases}
       \frac{1}{\sqrt{1-x^2}}\operatorname{arccos}(x) & 0 \le x < 1, \\
       \frac{1}{\sqrt{x^2-1}}\operatorname{arctanh}(\sqrt{1-\frac{1}{x^2}}) & x > 1, \\
       1 & x = 1.
     \end{cases} 
  \end{equation*}
  For $x \in (0,1)$ this results in 
  \begin{equation*}
    \frac{1}{3} x R_D(1, 1, x^2) = \frac{1}{1-x^2}-\frac{x \arccos(x)}{\left(1-x^2\right)^{3/2}}.
  \end{equation*}
  On the other hand we have 
  \begin{equation*}
    \frac{1}{2} \frac{\df^2}{\df{x}^2} \left[\arccos(x)\right]^2 = \left(\frac{\df}{\df{x}}  \arccos(x) \right)^2 + \arccos(x) \frac{\df^2}{\df{x}^2} \arccos(x),
  \end{equation*}
  where 
  \begin{equation*}
     \frac{\df}{\df{x}} \arccos(x) = - \frac{1}{\sqrt{1-x^2}} \quad \text{and} \quad
     \frac{\df^2}{\df{x}^2} \arccos(x) = - \frac{x}{\left(1-x^2\right)^{3/2}}.
  \end{equation*}
  This also results in
  \begin{equation*}
    \frac{1}{2} \frac{\df^2}{\df{x}^2} \left[\arccos(x)\right]^2 =  \frac{1}{1-x^2}  - \frac{x \arccos(x)}{\left(1-x^2\right)^{3/2}} .
  \end{equation*}
\end{proof}

\begin{lemma}
  Similarly to \cref{eq:R_D_arccos} we have for $x \in (1,\infty)$
  \begin{equation}
    \label{eq:R_D_arctanh}
    \frac{1}{3} x R_D(1, 1, x^2) = -\frac{1}{2} \frac{\df^2}{\df{x}^2} \left[\operatorname{arctanh}\left(\sqrt{1-\frac{1}{x^2}}\right)\right]^2 .
  \end{equation}
\end{lemma}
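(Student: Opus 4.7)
The plan is to mirror the proof of the previous lemma almost verbatim, now using the second branch of $R_C$. First, I would combine \cite[eq.~19.20.20]{OlverLozierBoisvertClark2010} (already quoted in the previous proof) with the formula for $R_C(x^2,1)$ in the regime $x>1$, obtaining a closed expression for $\frac{1}{3}xR_D(1,1,x^2)$ in terms of $\operatorname{arctanh}(\sqrt{1-1/x^2})$ and of $x^2-1$ (instead of $1-x^2$), namely
\begin{equation*}
  \frac{1}{3} x R_D(1,1,x^2) = \frac{x\,\operatorname{arctanh}\!\left(\sqrt{1-1/x^2}\right)}{(x^2-1)^{3/2}} - \frac{1}{x^2-1}.
\end{equation*}

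Second, I would expand the right-hand side of \cref{eq:R_D_arctanh} using the identity $\tfrac{1}{2}(f^2)'' = (f')^2 + f f''$ with $f(x) := \operatorname{arctanh}(\sqrt{1-1/x^2})$. The derivatives of $f$ are a routine chain-rule computation which, pleasingly, simplifies a lot: writing $u(x) := \sqrt{1-1/x^2}$, the identity $1-u(x)^2 = 1/x^2$ makes the factors of $x^2$ appearing in $\operatorname{arctanh}'(u)$ and in $u'(x)$ cancel, leaving the clean expressions $f'(x) = 1/\sqrt{x^2-1}$ and $f''(x) = -x/(x^2-1)^{3/2}$. Inserting these into $-\tfrac{1}{2}(f^2)''$ reproduces exactly the closed form displayed above.

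The main substantive point, more a bookkeeping subtlety than a real obstacle, is the sign change relative to \cref{eq:R_D_arccos}. The analogous manipulation for $0<x<1$ produced $+1/(1-x^2) - x\arccos(x)/(1-x^2)^{3/2}$, whereas in the present regime the $\operatorname{arctanh}$ branch naturally yields $-1/(x^2-1) + x\operatorname{arctanh}(\sqrt{1-1/x^2})/(x^2-1)^{3/2}$; the flip $1-x^2 \leftrightarrow -(x^2-1)$ is precisely what the leading minus sign on the right-hand side of \cref{eq:R_D_arctanh} is there to absorb. Once this is accounted for, the two sides match term by term and the lemma follows.
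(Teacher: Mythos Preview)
Your proposal is correct and is exactly the ``similar'' argument the paper alludes to: the paper does not spell out a proof for this lemma, but the intended route is the straightforward parallel of Lemma~1 using the $x>1$ branch of $R_C(x^2,1)$, which is precisely what you do. Your derivative computations $f'(x)=1/\sqrt{x^2-1}$ and $f''(x)=-x/(x^2-1)^{3/2}$ and the resulting match with the $R_D$ side are all accurate.
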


\begin{remark}
   By analytic continuation, \cref{eq:R_D_arccos} 
   and \cref{eq:R_D_arctanh} 
   extend to the right half plane of complex numbers $\{ z \in \C: \Re(z) > 0 \}$.
   On this set both equations are identical. Due to the simplicity we prefer to use \cref{eq:R_D_arccos} in the following.
\end{remark}

\section{Limits and asymptotics of the inverse}

\begin{remark}[Limits]
  We have the following limits on the real axis
  \begin{equation*}
    \lim_{x \rightarrow 0^+} x R_D(1, 1, x^2) = 3
  \end{equation*}
  and in the complex plane we have
  \begin{equation*}
    \lim_{z \rightarrow 1} z R_D(1, 1, z^2) = 1 \quad \text{and} \quad
    \lim_{z \rightarrow \infty} z R_D(1, 1, z^2) = 0 .
  \end{equation*}
\end{remark}

\begin{remark}[Asymptotics of the inverse for $x \rightarrow \infty$]

  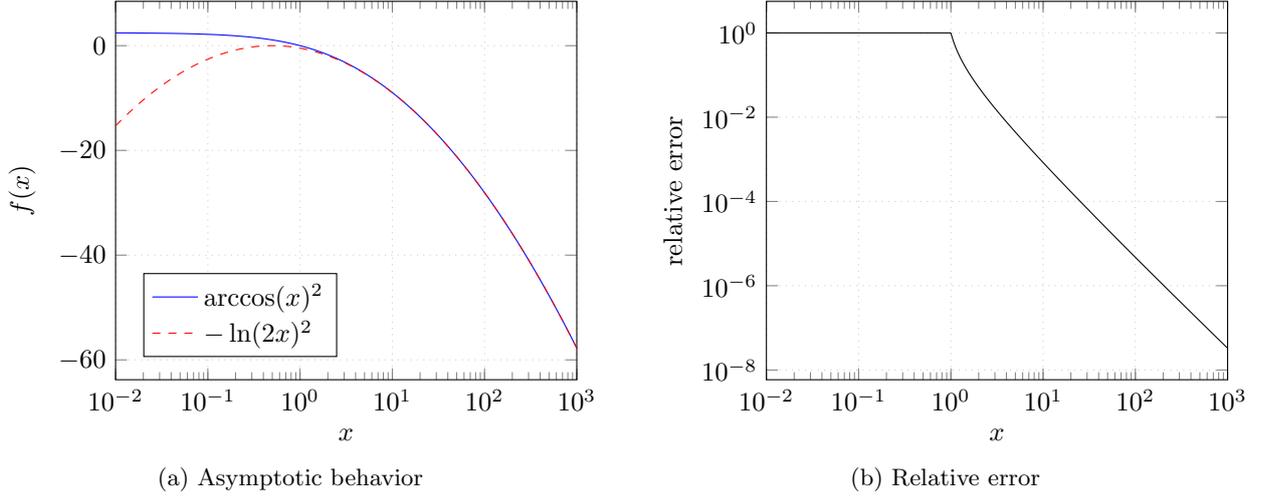
\begin{figure}
    \centering
    \begin{subfigure}[t]{.45\textwidth}
      \centering
      \begin{tikzpicture}[]
        \begin{axis} [
          width=\linewidth,
          xmin=0.01, xmax=1000, xmode=log,
          grid=major,
          grid style={dotted,gray!40},
          legend style={at={(0.06,0.06)},anchor=south west},
          legend cell align=left,
          xlabel=$x$,
          ylabel=$f(x)$
        ]
        \addplot gnuplot [raw gnuplot, id=acos, mark=none, smooth, color=blue]{
          set logscale x;
          \gnuplotdefs
          plot [x=0.01:1000] real(acos(x)**2);
        };
        \addlegendentry{$\arccos(x)^2$};
        \addplot gnuplot [raw gnuplot, id=log, dashed, smooth, mark=none, color=red]{
          set logscale x;
          \gnuplotdefs
          plot [x=0.01:1000] -log(2*x)**2;
        };
        \addlegendentry{$-\ln(2x)^2$};
        \end{axis}
      \end{tikzpicture}
      \caption{Asymptotic behavior}
      \label{fig:arcos_asymptotes}
    \end{subfigure}%
    \hspace{0.05\textwidth}
    \begin{subfigure}[t]{.45\textwidth}
      \centering
      \begin{tikzpicture}[]
          \begin{axis} [
            width=\linewidth,
            xmin=0.01, xmax=1000, xmode=log,
  %           ymin=1e-8, ymax=10,
            ymode=log,
  %           yminorticks=true,
  %           ytick={1e1, 1e-1, 1e-3, 1e-5, 1e-7},
            grid=major,
            grid style={dotted,gray!40},
      %       legend pos=north east,
      %       legend pos=outer north east,
            xlabel=$x$,
            ylabel=relative error,
      %     ymin=0, ymax=0.05,
      %     axis x line=box,
      %     axis y line=box
          ]
          \addplot gnuplot [raw gnuplot, id=log_error, mark=none, color=black]{
            set logscale x;
            \gnuplotdefs
            plot [x=0.01:1000] (real(acos(x)**2) + log(2*x)**2)/(abs(real(acos(x)**2)) + log(2*x)**2);
          };
          \end{axis}
      \end{tikzpicture}
      \caption{Relative error}
      \label{fig:arcos_error}
    \end{subfigure}%
    \caption{Asymptotic behavior (\subref{fig:arcos_asymptotes}) and relative error  (\subref{fig:arcos_error}) for the approximation of $\arccos(x)^2$ by $-\ln(2x)^2$ as in \cref{eq:arccos_approx}}
    \label{fig:arccos_approx}
  \end{figure}

  We have the asymptotics
  \begin{equation}
    \label{eq:arccos_approx}
    \left( \arccos(x) \right)^2 \sim -\left( \ln(2x) \right)^2 \quad (x \rightarrow \infty)
  \end{equation}
  from the fact that $\operatorname{arccos}(x) = \mathrm{i} \operatorname{arccosh}(x) = \mathrm{i} \ln(x + \sqrt{x^2 - 1})$ for $x > 1$ \cite[eq.~4.37.19]{OlverLozierBoisvertClark2010}. \Cref{fig:arccos_approx} shows the accuracy of the approximation.
% http://dlmf.nist.gov/4.37.E19
  Taking the second derivative we have
  \begin{equation*}
    -\frac{1}{2} \frac{\df^2}{\df{x}^2} \left[ \ln(2x) \right]^2 = \frac{1}{x^2} \left(\ln(2 x) - 1 \right).
  \end{equation*}
  Now the interesting solution to $a = \frac{1}{x^2} \left(\ln(2 x) - 1\right)$ (the asymptotics of the inverse of \cref{eq:R_D_arccos} for $x \rightarrow \infty$) is given by
  \begin{equation}
    \label{eq:asymptotics_a_zero}
    x  = \frac{\mathrm{e}}{2} \mathrm{e}^{-\frac{1}{2} W_{-1}\left(-\frac{\mathrm{e}^2 a}{2}\right)} = \sqrt{-\frac{1}{2a} W_{-1}\left(-\frac{\mathrm{e}^2 a}{2}\right)}   \quad (a \rightarrow 0^+).
  \end{equation}
  where $W_{-1}$ denotes the negative branch of the Lambert $W$ function.
  For negative arguments the Lambert $W$ function has actually two solutions/branches.
  We have to choose the branch for which \cref{eq:asymptotics_a_zero} goes to infinity as $a \rightarrow 0^+$, which is $W_{-1}$.
  Due to the approximation of the $\arccos(x)^2$, \cref{eq:asymptotics_a_zero} is valid only for $a \in [2 \mathrm{e}^{-3}, 0]$.
  $W_{-1}$ can be represented by the continued fraction \cite{Veberic2010}
% https://arxiv.org/pdf/1003.1628.pdf
  \begin{equation*}
    W_{-1}(x) = \ln \frac{x}{\ln \frac{x}{\ln \frac{x}{\dots}}}  \quad x \in (-\mathrm{e}^{-1}, 0)
  \end{equation*}
  and for the last $\dots$ one inserts $\ln(-x)$.
  Furthermore we have the limit
  \begin{equation}
    \label{eq:Lambert_W_limit_0}
    \lim_{x\to 0^-} \frac{W_{-1}(x)}{\ln(-x)} = 1.
  \end{equation}
\end{remark}

% \begin{remark}[asymptotics for $x \rightarrow 1$]
% TODO
% \end{remark}

\begin{remark}[Asymptotics of the inverse for $x \rightarrow 0$]
  The Taylor series at $x=0$ for $\frac{1}{2} \arccos(x)^2$ reads
  \begin{equation*}
    \frac{1}{2} \arccos(x)^2 = \frac{\pi ^2}{8} - \frac{\pi x}{2} + \frac{x^2}{2} - \frac{\pi x^3}{12} + \frac{x^4}{6} - \frac{3 \pi x^5}{80} + \frac{4 x^6}{45} + \mathcal{O}(x^7)
  \end{equation*}
  and its second derivative is given by
  \begin{equation*}
    \frac{1}{2} \frac{\df}{\df x^2} \arccos(x)^2 = 1 - \frac{\pi x}{2} + 2 x^2 - \frac{3 \pi x^3}{4} + \frac{8 x^4}{3} + \mathcal{O}(x^5).
  \end{equation*}
  Then for first order accuracy the solution to $a = 1 - \frac{\pi x}{2}$ is given by
  \begin{equation}
    \label{eq:asymptotics_a_one}
    x = \frac{2}{\pi} (1 - a) \quad (a \rightarrow 1^-)
  \end{equation}
  and for second order accuracy the interesting solution to $a = 1 - \frac{\pi x}{2} + 2 x^2$ is given by
  \begin{equation}
    \label{eq:asymptotics_a_one_2nd}
    x = \frac{1}{8} \left(\pi -\sqrt{32 a+\pi ^2-32}\right).
  \end{equation}
\end{remark}

\section{Application note}

Let $f_n$ denote the density function of the $n$-dimensional angular Gaussian (ACG) distribution \cite{Tyler1987}, i.e.
\begin{equation}
  \label{eq:acg_density}
  f_n(p;\, \Lambda) := \frac{1}{2}\Gamma\left( \frac{n}{2} \right) \frac{1}{\pi^{\frac{n}{2}} |\Lambda|^{\frac{1}{2}}} \left( p^T \Lambda^{-1} p \right)^{-\frac{n}{2}}, \quad p \in \SSd,
\end{equation}
where $\Lambda = B^{-1} \in \R^{n \times n}$ is a symmetric and positive semidefinite distribution parameter and $\SSd$ denotes the surface of the unit sphere in $n$ dimensions. Moments of \cref{eq:acg_density} are given by
\begin{equation}
  \label{eq:moment_tensors}
  \AAn{r} := \int_{\SSd} p^{\otimes r} f_n(p, \Lambda) \, \dx{p}
\end{equation}
where $p^{\otimes r}$ denotes the $r$-fold outer product of a point $p \in \SSd$.
In the following we restrict ourselves to $n=3$ and to the 2nd and 4th order moment
\begin{equation*}
%   \label{eq:MomentsFromACG}
  \begin{aligned}
    \AII &:= \mathbb{A}^{2,3} = \frac{1}{4\pi}  \int_{\SII} (p \otimes p)  \; (B : (p \otimes p))^{-\frac{3}{2}} \dx{p} \quad \text{and}\\
    \AIV &:= \mathbb{A}^{4,3} = \frac{1}{4\pi}  \int_{\SII}  (p \otimes p \otimes p \otimes p) \; (B : (p \otimes p))^{-\frac{3}{2}} \dx{p},
  \end{aligned}
\end{equation*}
which can equivalently be written as elliptic integrals \cite{MontgomerySmith2011}
\begin{equation}
  \label{eq:FEC_b_from_a_0II}
  \AII = \frac{1}{2} \int_0^\infty \!\! \frac{(B + t I)^{-1}}{\sqrt{\det(B + tI)}} \, \dx{t} \quad \text{and}
\end{equation}
% and
\begin{equation}
  \label{eq:FEC_AIV}
  \AIV = \frac{3}{4} \int_0^\infty \frac{t \SYM\!\left( (B + t I)^{-1} \! \otimes (B + t I)^{-1} \right)}{\sqrt{\det(B + t I)}} \, \dx{t},
\end{equation}
where $\SYM$ denotes the symmetrization of a rank-4 tensor, i.e., $\SYM(\mathbb{B})_{ijk\ell}$ is the average of $\mathbb{B}_{mnpq}$ over all 24 permutations $(m,n,p,q)$ of $(i,j,k,\ell)$.
For the simulation of the moments of the orientation distribution of short fibers immersed in a fluid a map from $\AII$ to $\AIV$ is required. This is known as the ``closure problem''.
\Cref{eq:FEC_b_from_a_0II} gives a one-to-one correspondence between $\AII$ and $B$. $B$ can be computed from $A$ using Newton's method for example and subsequently used to compute \cref{eq:FEC_AIV}.
Note that if $\AII$ is diagonal, then also $B$ is diagonal and the eigenbasis of $\AII$ also diagonalizes $B$, thus for diagonalized $\AII = \diag(a)$ and $B=\diag(b)$ \cref{eq:FEC_b_from_a_0II} can be written also component-wise as 
\begin{equation}
  \label{eq:FEC_b_from_a_0II_component_wise}
  a_i = \frac{1}{2} \int_0^\infty \!\! \frac{(t + b_i)^{-1}}{\sqrt{(t + b_1)(t + b_2)(t + b_3)}} \, \dx{t}.
\end{equation}
\Cref{eq:FEC_b_from_a_0II_component_wise} is of the Carlson-type elliptic integral $R_D$ and the inversion problem of \cref{eq:FEC_b_from_a_0II} can be written as
\begin{equation*}
  a_1 = \frac{1}{3} R_D(b_2, b_3, b_1), \quad 
  a_2 = \frac{1}{3} R_D(b_1, b_3, b_2) \quad \text{and} \quad
  a_3 = \frac{1}{3} R_D(b_1, b_2, b_3).
\end{equation*}
We simply write in vectorized form $a = \frac{1}{3} R_D(b)$.
By using properties of the derivative of $R_D$ \cref{eq:FEC_AIV} can further be simplified to
\begin{equation}
  \label{eq:exact_closure}
  \AIV_{iijj} = \frac{1}{2} \begin{cases}
      2 a_{i} -  \frac{a_{k}b_{k} - a_{i}b_{i}}{b_{k}-b_{i}} - \frac{a_{\ell}b_{\ell} - a_{i} b_{i}}{b_{\ell}-b_{i}}  & \text{if $i = j$,}\\
      \frac{a_{i}b_{i} - a_{j}b_{j}}{b_{i}-b_{j}} & \text{if $i \neq j$.}
  \end{cases}
\end{equation}
Due to the normalization $\int_{\SSd} f_n(p, \Lambda) \, \dx{p} = 1$ we have $\tr(\AII) = a_1 + a_2 + a_3 = 1$ as well as $\det(B) = b_1 b_2 b_3 = 1$.

\begin{figure}[t!]
  \begin{center}
    \includegraphics[width=\linewidth]{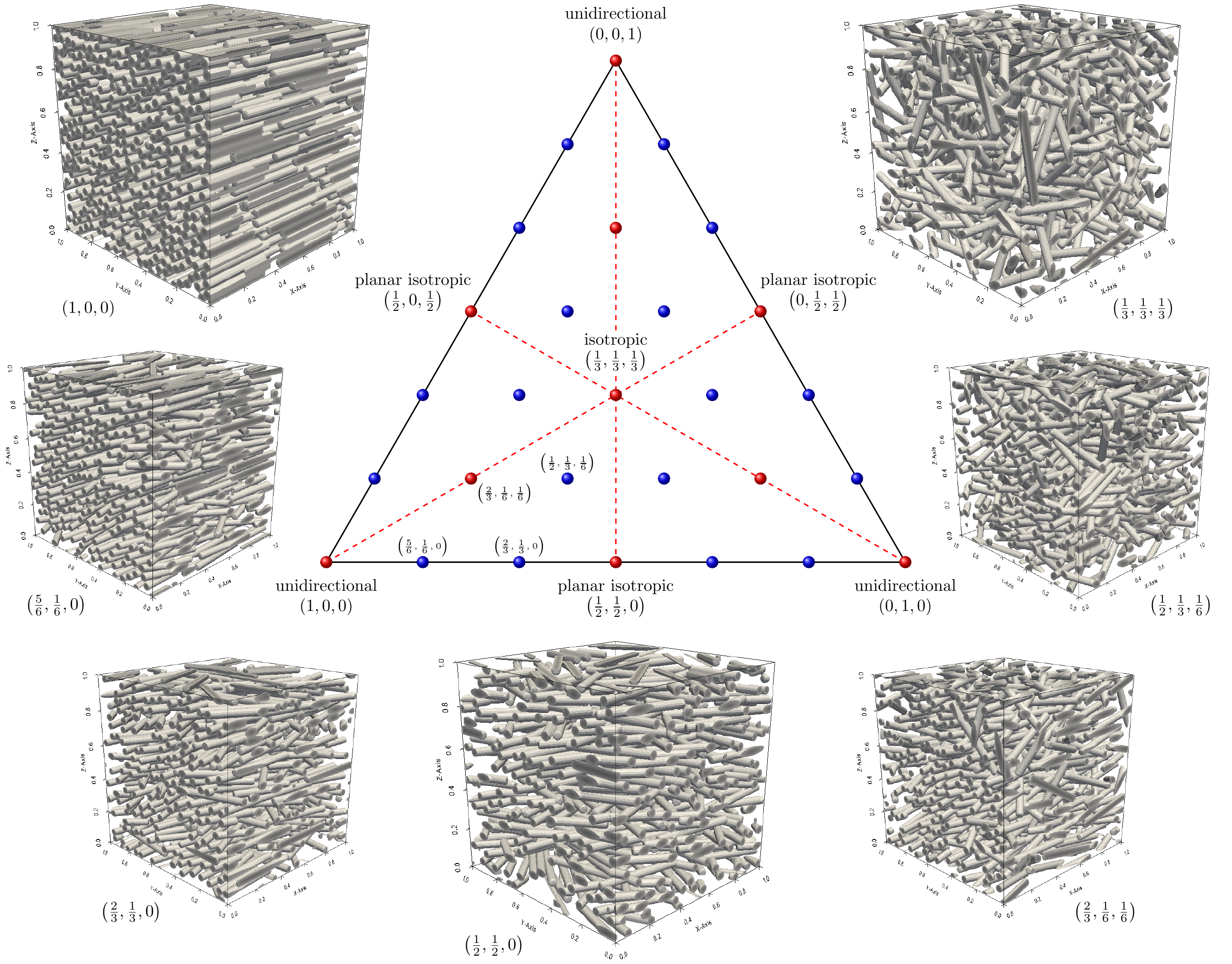}
  \end{center}
  \vspace{-10pt}
  \caption{Visualization of representative moments and their location (eigenvalues) on the unit simplex. The moments on the symmetry lines considered in this article are shown as red dashed lines.}
  \label{fig:fo_simplex}
\end{figure}

\paragraph{Unidirectional orientation states}

In this trivial case the full three-dimensional ACG distribution reduces to a one-dimensional one. Two entries of $a$ are zero and the corresponding elements of $b$ tend to infinity. The remaining entry of $a$, $a_i$ is one and the corresponding element $b_i$ goes to zero.
So clearly $\AII = p \otimes p$ and the closure is given by $\AIV = \AII \otimes \AII = p \otimes p \otimes p \otimes p$ where $p=e_i$ is the $i$-th unit vector.
The unidirectional orientation states are shown in \cref{fig:fo_simplex} as the corner vertices of the unit simplex.

\paragraph{Planar orientation states}

If only one entry of $a$ is zero the ACG distribution reduces to a two-dimensional ACG distribution. The planar orientation states are shown in \cref{fig:fo_simplex} as boundary faces of the unit simplex.
For instance if $a_3 = 0$ then we have according to \cref{eq:acg_density}
\begin{equation*}
    \AII = \mathbb{A}^{2,2} = \frac{1}{2\pi}  \int_{\SII} (p \otimes p)  \; (B : (p \otimes p))^{-1} \dx{p}
\end{equation*}
using the parameterization $p(\varphi) = (\cos(\varphi), \sin(\varphi))$ we get
\begin{equation*}
    a_1 = \frac{1}{2\pi}  \int_0^{2\pi} \frac{\cos^2(\varphi)}{(b_1 \cos^2(\varphi) + b_2 \sin^2(\varphi) )}  \dx{\varphi} = \frac{1 - \sqrt{\frac{b_2}{b_1}}}{b_1-b_2} = \frac{1}{b_1+1}
\end{equation*}
or more generally
\begin{equation*}
    A = (B + I)^{-1} \quad \text{and} \quad B = A^{-1} - I,
\end{equation*}
where we have used $b_1 b_2 = 1$. Inserting $b_i = \frac{1}{a_i} - 1$ into \cref{eq:exact_closure} we arrive at 
\begin{equation}
  \label{eq:exact_closure_planar}
  \AIV_{iijj} = \frac{1}{2} a_i (a_j + \delta_{ij})
\end{equation}
which is valid for the planar orientation states including the unidirectional states.
% \Cref{eq:exact_closure_planar} generalizes to 
% \begin{equation*}
%   \AIV = \frac{1}{2} \SYM\!\left( \AII \otimes (\AII + I) \right)
% \end{equation*}
% for the non-diagonal case.

\paragraph{Non-planar axial-symmetric orientation states}

In the case that exactly two eigenvalues of $\AII$ are equal on can obtain a analytic representation o \cref{eq:FEC_b_from_a_0II_component_wise}. The corresponding orientation states are shown in \cref{fig:fo_simplex} as red dashed lines.
Let $i,j,k \subset \{1,2,3\}$ be distinctive indices, $a \in [0,1]$, $b \in \R$
and $a_i = a$, $a_j = a_k = \frac{1-a}{2}$ and $b_i=b$, $b_j=b_k=\frac{1}{\sqrt{b}}$.
Then we get for the integral \cref{eq:FEC_b_from_a_0II_component_wise}
\begin{equation}
  \label{eq:ax_symm_moments}
  a = \frac{1}{3} R_D\left(\frac{1}{\sqrt{b}}, \frac{1}{\sqrt{b}}, b \right) = \frac{1 - \frac{b^{3/4} \arccos(b^{3/4})}{ \sqrt{1 - b^{3/2}} } }{ 1 - b^{3/2} } = \left. \frac{1}{2} \frac{\df^2}{\df{x}^2} \left[\arccos(x)\right]^2 \right|_{x=b^{3/4}},
\end{equation}
which resembles \cref{eq:R_D_arccos}, noting that $R_D$ is homogeneous of degree $-3/2$. An inversion of \cref{eq:ax_symm_moments} seems to be possible only numerically. However the simple relation to the second derivative of $[\arccos(b)]^2$ is remarkable, which is also the ``message'' of this article.
% Further discussion follows in \cref{sec:outlook}.
% 
However, using \cref{eq:asymptotics_a_zero} with $x=b^{\frac{3}{4}}$ gives the asymptotic inverse
\begin{equation*}
    b 
    = \left[ \frac{\mathrm{e}}{2} \mathrm{e}^{-\frac{1}{2} W_{-1}\left(-\frac{\mathrm{e}^2 a}{2}\right)}\right]^{\frac{4}{3}} 
    = \left[-\frac{1}{2a} W_{-1}\!\left(-\frac{\mathrm{e}^2 a}{2}\right) \right]^{\frac{2}{3}} \quad (a \rightarrow 0^+).
\end{equation*}
Inserting into \cref{eq:exact_closure} results in
\begin{equation}
  \label{eq:AIV_asymptote_1}
  \AIV_{iiii} 
  = \frac{3 a-1}{2-\frac{\mathrm{e}^2}{2}  \mathrm{e}^{-W_{-1}\left(-\frac{\mathrm{e}^2 a}{2}\right)}} 
  = \frac{3 a-1}{2 + \frac{1}{a} W_{-1}\!\left(-\frac{\mathrm{e}^2 a}{2} \right)} \quad (a \rightarrow 0^+).
\end{equation}
\begin{figure}
  \centering
  \begin{subfigure}[t]{.45\textwidth}
    \centering
    \begin{tikzpicture}[]
      \begin{axis} [
        width=\linewidth,
        xmin=1e-6, xmax=0.1, xmode=log,
        ymode=log,
        yminorticks=false,
        max space between ticks=40,
        grid=major,
        grid style={dotted,gray!40},
        legend style={at={(0.06,0.95)},anchor=north west},
        legend cell align=left,
        xlabel=$a$,
        ylabel=$\AIV_{iiii}$
      ]
        \addplot gnuplot [raw gnuplot, id=asymptote_1_0, mark=none, smooth, color=red]{
          set logscale x;
          \gnuplotdefs
          plot [x=1e-6:0.1] Aiiii_asymptote(x);
        };
        \addlegendentry{\cref{eq:AIV_asymptote_1}};
        \addplot gnuplot [raw gnuplot, id=asymptote_2_0, mark=none, smooth, color=blue]{
          set logscale x;
          \gnuplotdefs
          plot [x=1e-6:0.1] Aiiii_asymptote_2(x);
        };
        \addlegendentry{\cref{eq:AIV_asymptote_2}};
%         \addplot gnuplot [raw gnuplot, id=asymptote_3_0, mark=none, smooth, color=green!50!black]{
%           set logscale x;
%           \gnuplotdefs
%           plot [x=1e-6:0.1] Aiiii_asymptote_3(x);
%         };
%         \addlegendentry{\cref{eq:AIV_asymptote_3}};
        \addplot gnuplot [raw gnuplot, id=exact_0, mark=none, smooth, dashed, thick, color=black]{
          \gnuplotdefs
          set logscale x;
          set datafile separator ",";
          plot 'b_from_a_0.csv' using ($1):(Aiiii($1,$2));
        };
        \addlegendentry{exact};
      \end{axis}
    \end{tikzpicture}
    \caption{$\AIV_{iiii}$ asymptotics for $a \rightarrow 0$}
    \label{fig:AIV_asymptotes_0}
  \end{subfigure}%
  \hspace{0.05\textwidth}
  \begin{subfigure}[t]{.45\textwidth}
    \centering
    \begin{tikzpicture}[]
        \begin{axis} [
          width=\linewidth,
          xmin=1e-6, xmax=0.1, xmode=log,
          max space between ticks=25,
          ymode=log,
          grid=major,
          grid style={dotted,gray!40},
          legend style={at={(0.94,0.06)},anchor=south east},
          legend cell align=left,
          xlabel=$a$,
          ylabel={relative error},
        ]
        \addplot gnuplot [raw gnuplot, id=asymptote_1_0_error, mark=none, smooth, color=red]{
          \gnuplotdefs
          set logscale x;
          set datafile separator ",";
          plot 'b_from_a_0.csv' using ($1):(Aiiii_asymptote($1)/Aiiii($1,$2) - 1.0);
        };
        \addlegendentry{\cref{eq:AIV_asymptote_1}};
        \addplot gnuplot [raw gnuplot, id=asymptote_2_0_error, mark=none, smooth, color=blue]{
          \gnuplotdefs
          set logscale x;
          set datafile separator ",";
          plot 'b_from_a_0.csv' using ($1):(Aiiii_asymptote_2($1)/Aiiii($1,$2) - 1.0);
        };
        \addlegendentry{\cref{eq:AIV_asymptote_2}};
%         \addplot gnuplot [raw gnuplot, id=asymptote_3_0_error, mark=none, smooth, color=blue]{
%           \gnuplotdefs
%           set logscale x;
%           set datafile separator ",";
%           plot 'b_from_a_0.csv' using ($1):(Aiiii_asymptote_3($1)/Aiiii($1,$2) - 1.0);
%         };
%         \addlegendentry{\cref{eq:AIV_asymptote_3}};
        \end{axis}
    \end{tikzpicture}
    \caption{$\AIV_{iiii}$ error for $a \rightarrow 0$}
    \label{fig:AIV_error_0}
  \end{subfigure}%
  \caption{Asymptotics (\subref{fig:AIV_asymptotes_0}) and relative error (\subref{fig:AIV_error_0}) of the exact closure \cref{eq:exact_closure} for $a \rightarrow 0$}
  \label{fig:Asymptotes_0}
\end{figure}%
Note that \cref{eq:AIV_asymptote_1} does not give the correct asymptotic behavior if $W_{-1}(x)$ is further approximated by $\ln(-x)$, i.e.
% WRONG asymptote!
\begin{equation}
  \label{eq:AIV_asymptote_2}
  \AIV_{iiii} 
  \stackrel{!}{=} \frac{3 a-1}{2 + \frac{1}{a} \ln\!\left(\frac{\mathrm{e}^2 a}{2} \right)} \quad (a \rightarrow 0^+)
\end{equation}
as suggested by \cref{eq:Lambert_W_limit_0}.
As seen in \cref{fig:Asymptotes_0}, \cref{eq:AIV_asymptote_1} shows the correct asymptote when compared to the exact moment computed by numerical inversion, whereas \cref{eq:AIV_asymptote_2} is slightly off and actually does not have the limit $0$ for $a \rightarrow 0$. The error for \cref{eq:AIV_asymptote_2} drops so slowly that it even does not reach zero for $a \rightarrow 0$ and a limit does not exist!
% 
% another WRONG asymptote:
% \begin{equation}
%   \label{eq:AIV_asymptote_3}
%   \AIV_{iiii} 
%   = \frac{3 a-1}{2-\frac{\mathrm{e}^2}{2}  \mathrm{e}^{-\ln\left(\frac{\mathrm{e}^2 a}{2}\right)}} 
%   = \frac{3 a-1}{2- \frac{1}{a} } 
% %   = \frac{3 a-1}{2 + \frac{1}{a} \ln\left(\frac{\mathrm{e}^2 a}{2} \right)}
%   \quad (a \rightarrow 0^+)
% \end{equation}
% 
% 
\begin{figure}
  \centering
  \begin{subfigure}[t]{.45\textwidth}
    \centering
    \begin{tikzpicture}[]
        \begin{axis} [
          width=\linewidth,
          xmin=0.0001, xmax=0.30, xmode=log,
          grid=major,
          grid style={dotted,gray!40},
          legend style={at={(0.06,0.06)},anchor=south west},
          legend cell align=left,
          xlabel=$1-a$,
          ylabel=$\AIV_{iiii}$
        ]
        \addplot gnuplot [raw gnuplot, id=asymptote_4_1, mark=none, smooth, color=red]{
          set logscale x;
          \gnuplotdefs
          plot [x=0.0001:0.30] Aiiii_asymptote_4(1-x);
        };
        \addlegendentry{\cref{eq:AIV_asymptote_4}};
        \addplot gnuplot [raw gnuplot, id=asymptote_5_1, mark=none, smooth, color=blue]{
          set logscale x;
          \gnuplotdefs
          plot [x=0.0001:0.30] Aiiii_asymptote_5(1-x);
        };
        \addlegendentry{\cref{eq:AIV_asymptote_5}};
        \addplot gnuplot [raw gnuplot, id=exact_1, mark=none, smooth, dashed, thick, color=black]{
          \gnuplotdefs
          set logscale x;
          set datafile separator ",";
          plot 'b_from_a_1.csv' using (1-$1):(Aiiii($1,$2));
        };
        \addlegendentry{exact};
      \end{axis}
    \end{tikzpicture}
    \caption{$\AIV_{iiii}$ asymptotics for $a \rightarrow 1$}
    \label{fig:AIV_asymptotes_1}
  \end{subfigure}%
  \hspace{0.05\textwidth}
  \begin{subfigure}[t]{.45\textwidth}
    \centering
    \begin{tikzpicture}[]
      \begin{axis} [
        width=\linewidth,
        xmin=0.0001, xmax=0.3, xmode=log,
        ymode=log,
        grid=major,
        grid style={dotted,gray!40},
        legend style={at={(0.94,0.06)},anchor=south east},
        legend cell align=left,
        xlabel=$1-a$,
        ylabel={relative error}
      ]
        \addplot gnuplot [raw gnuplot, id=asymptote_4_1_error, mark=none, smooth, color=red]{
          \gnuplotdefs
          set datafile separator ",";
          plot 'b_from_a_1.csv' using (1-$1):(Aiiii($1,$2)/Aiiii_asymptote_4($1) - 1.0);
        };
        \addlegendentry{\cref{eq:AIV_asymptote_4}};
        \addplot gnuplot [raw gnuplot, id=asymptote_5_1_error, mark=none, smooth, color=blue]{
          \gnuplotdefs
          set datafile separator ",";
          plot 'b_from_a_1.csv' using (1-$1):(1 - Aiiii($1,$2)/Aiiii_asymptote_5($1));
        };
        \addlegendentry{\cref{eq:AIV_asymptote_5}};
      \end{axis}
    \end{tikzpicture}
    \caption{$\AIV_{iiii}$ error for $a \rightarrow 1$}
    \label{fig:AIV_error_1}
  \end{subfigure}%
  \caption{Asymptotics (\subref{fig:AIV_asymptotes_0}) and relative error  (\subref{fig:AIV_error_0}) of the exact closure \cref{eq:exact_closure} for $a \rightarrow 1$}
  \label{fig:Asymptotes_1}
\end{figure}%
For the asymptotes $a \rightarrow 1$ we proceed similarly: \cref{eq:asymptotics_a_one} with $x=b^{\frac{3}{4}}$ gives
\begin{equation*}
    b = \left[ \frac{2}{\pi} (1 - a) \right]^{\frac{4}{3}} \quad (a \rightarrow 1^-)
\end{equation*}
and inserting this into \cref{eq:exact_closure} results in
\begin{equation}
  \label{eq:AIV_asymptote_4}
  \AIV_{iiii} = \frac{3 a - 1}{2 - \frac{8}{\pi ^2} (a-1)^2} \quad (a \rightarrow 1^-)
\end{equation}
Similarly we get for second order accuracy using \cref{eq:asymptotics_a_one_2nd}
\begin{equation}
  \label{eq:AIV_asymptote_5}
  \AIV_{iiii} = \frac{3 a-1}{2-\frac{1}{32}\left(\pi - \sqrt{32(a-1) + \pi^2}\right)^2}.
\end{equation}
The behavior and relative error is shown for both equations in \cref{fig:Asymptotes_1}.

\section{Conclusions and outlook}
\label{sec:outlook}

Using the relation \cref{eq:R_D_arccos} which is, to the authors' knowledge, not explicitly documented elsewhere, we were able to 
derive different asymtotes for the exact closure.
An explicit (series) expression of $\AIV$ in terms of $\AII$ for the non-planar axial-symmetric orientation states as well as other full three dimensional orientation states is still unknown. 
If such an expression exists it should include the planar and unidirectional states as well, i.e.\ it should coincide with \cref{eq:exact_closure_planar} for the planar limit.
As we can see from the different asymtotes \cref{eq:AIV_asymptote_1,eq:AIV_asymptote_2,eq:AIV_asymptote_4,eq:AIV_asymptote_5}, the general 3d case assumes a complex behavior between hyperbolic and rational functions.
In the future the authors will further investigate the asymptotic behavior outside of the symmetry lines.
Further we will look into the numerical approximation of the full tensor $\AIV$ in terms of $\AII$ including a tensor representation of \cref{eq:exact_closure_planar}, without the requirement of an eigen-decomposition of the second moment.

% It is also remarkable that $R_D$ is derived from the symmetric integral 
% \begin{equation*}
%   R_F(x,y,z) = \frac{1}{2} \int_0^\infty \!\! \frac{1}{\sqrt{x + t}\sqrt{y + t}\sqrt{z + t}} \dx{t}
% \end{equation*}
% by 
% \begin{equation*}
%   \frac{1}{3} R_D(x,y,z) = -2 \frac{\partial }{\partial z} R_F(x,y,z)
% \end{equation*}
% whereas in \cref{eq:R_D_arccos} we have the second derivative.
% The relation \cref{eq:R_D_arccos} makes the problem also more accessible for numerical approximation. I.e.\ only a approximation of $\arccos(x)$ is required...

% Open problems:
% \begin{itemize}
%   \item a good and simple approximation of the inverse for $x \rightarrow 1$
%   \item asymptotics on all boundaries (not just on the symmetry lines)
%   \item is \cref{eq:R_D_arccos} explicitly mentioned anywhere in the literature?
% \end{itemize}

% Is there a possibility to extend \cref{eq:ax_symm_moments} to the general three dimensional case?.
% Approximations  

\section*{Acknowledgments}

This work was performed within the Federal Cluster of Excellence EXC 1075 ``MERGE Technologies for Multifunctional Lightweight Structures'' and supported by the German Research Foundation (DFG). Financial support is gratefully acknowledged.
 
\medskip

% \newcommand*{\doi}[1]{\href{http://dx.doi.org/#1}{doi: #1}}

% \bibliographystyle{amsalpha}
% \bibliography{library}
\printbibliography

\end{document}